\def\la#1{\hbox to #1pc{\leftarrowfill}}
\def\ra#1{\hbox to #1pc{\rightarrowfill}}
\def\fract#1#2{\raise4pt\hbox{$ #1 \atop #2 $}}
\def\bbS{{S}_{n}}
\def\bbs{{ S_{n}^{\ast}}}
\def\H2{\hbox{{\rm H}\kern-0.9em\hbox{{\rm I}\ \ }}^2}
\def\h{\hbox{{\rm H}\kern-0.9em\hbox{{\rm I}\ \ }}}
\def\T{\hbox{{\rm T}\kern-0.66em\hbox{{\rm I}\ \ }}}
\def\R{\hbox { {\rm R}\kern-0.9em\hbox{{\rm I}\ }}}
\def\C{\hbox { {\rm C}\kern-0.56em\hbox{{\rm I}\ }}}
\theoremstyle{definition}
\theoremstyle{remark}
\numberwithin{equation}{section}
\newtheorem{theorem}{\textbf{Theorem}}[section]
\newtheorem{corollary}[theorem]{\textbf{Corollary}}
\newtheorem{lemma}[theorem]{\textbf{Lemma}}
\begin{document}
\title{On the higher rank numerical range of the shift operator}
\author{Haykel GAAYA}
\address{\ddag.Institute Camille Jordan, Office 107 University of Lyon1,
43 Bd November 11, 1918, 69622-Villeurbanne, France.}
\email{\ddag  gaaya@math.univ-lyon1.fr }
\subjclass[2000]{47A12, 47B35}
\keywords{Operator theory, Numerical radius, Numerical range, higher rank numerical range, Eigenvalues, Toeplitz forms}
\maketitle
\begin{abstract}
For any n-by-n complex matrix T and any $1\leqslant k\leqslant n$, let $\Lambda_{k}(T)$ the set of all $\lambda\in \C$ such that $PTP=\lambda P$ for some rank-k orthogonal projection $P$ be its higher rank-k numerical range. It is shown that if $\bbS$ is the n-dimensional shift on ${\C}^{n}$ then its rank-k numerical range is the circular disc centred in zero and with radius $\cos\dfrac{k\pi}{n+1}$ if $1<k\leqslant\left[\frac{n+1}{2} \right]$ and the empty set if $\left[\frac{n+1}{2} \right]<k\leqslant n$, where $\left[x \right] $ denote the integer part of $x$. This extends and rafines previous results of U. Haagerup, P. de la Harpe \cite{Haagerup} on the classical numerical range of the n-dimensional shift on${\C}^{n}$. An interesting result for higher rank-$k$ numerical range of nilpotent operator is also established.
\end{abstract}

\section{Introduction}
\hspace{0.5cm}Let $\mathcal{H}$ be a complex separable Hilbert space and $\mathcal{B(H)}$ the collection of all bounded linear operator on $\mathcal{H}$. The numerical range of an operators $T$ in $\mathcal{B(H)}$ is the subset $$W(T)=\left\lbrace <Tx,x>\in\C;x\in \mathcal{H} ,\lVert x \lVert\leqslant 1\right\rbrace $$  of the plane, where $<.,.>$ denotes the inner product in $\mathcal{H}$ and the numerical range of $T$ is defined by $$ \omega_{2}(T)=\sup \left\lbrace \lvert z\lvert; z\in W(T) \right\rbrace .$$ 
\hspace{0.5cm}We denote by $S$ the unilateral shift acting on the Hardy space $\H2$ of the square summable analytic functions. 
$$\begin{array}{ccccc}
S & : & \H2 & \to & \H2 \\
& & f & \mapsto & zf(z) \\
\end{array}$$
\hspace{0.5cm}Beurling's theorem implies that the non zero invariant subspaces of $S$ are of the forme $\phi~\H2$, where $\phi$ is some inner function . Let $S(\phi)$ denote the compression of $S$ to the space $ H(\phi)=\H2\ominus\phi~\H2 $ :
$$S(\phi)f(z)=P(zf(z)),$$ where $P$ denotes the ortogonal projection from $\H2$ onto $ H(\phi)$. The space $H(\phi)$ is a finite-dimensional exactly when $\phi$ is a finite Blaschke product. The numerical radius and numerical range of the model operator $S(\phi)$ seems to be important and have many applications. In \cite{Cassier}, Badea and Cassier showed that there is relationship between numerical radius of $S(\phi)$ and Taylor coefficients of positive rational functions on the torus and more recently in \cite{Gaaya}, the author gave an extension of this result. However the evaluation of the numerical radius of $S(\phi)$ under an explicit form is always an open problem. The reader may consult \cite{Gaaya} for an estimate of $S(\phi)$ where $\phi$ is a finite Blashke product with unique zero.
In the particular case where $\phi(z)=z^{n}$, $S(\phi)$ is unitarily equivalent to $\bbS$ where 

$$
\bbS=\left(
\begin{array}{cccc}
0 &       &       &  \\
1 & \ddots &      &   \\
  & \ddots &\ddots    &  \\
  &        & 1    & 0 
  
\end{array}
\right)
.$$
\hspace{0.5cm}In \cite{Haagerup}; it is proved that $W(\bbS)$ is the closed disc $D_{n }=\left\lbrace z\in \C ; \lvert z\lvert \leqslant \cos\frac{\pi}{n+1}\right\rbrace$ and $\omega_{2}(\bbS)=\cos\frac{\pi}{n+1}$
and more general
\begin{theorem}[\cite{Haagerup}]
\textit{ Let $T$ be an operator on $\mathcal{H}$ such that $T^{n}=0$ for some $n \geq 2$. One has:$$\omega_{2}(T)\leqslant \lVert T\lVert \cos\frac{\pi}{n+1}$$
and $\omega_{2}(T)=\lVert T\lVert \cos\frac{\pi}{n+1}$ when $T$ is unitarily equivalent to $\lVert T\lVert S_{n}$}.
\end{theorem}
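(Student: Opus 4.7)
The plan is to split the proof into two parts: first, compute the numerical radius of the model $\bbS$ exactly and identify it as the sharp constant, then establish the general upper bound for any nilpotent $T$ of order $n$.

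For the model computation, I would exploit the circular symmetry of $\bbS$: the diagonal unitary $D_{\theta}=\mathrm{diag}(1,e^{i\theta},e^{2i\theta},\ldots,e^{(n-1)i\theta})$ conjugates $e^{i\theta}\bbS$ into $\bbS$, so $W(\bbS)$ is rotation-invariant and hence a closed disc centered at $0$. Its radius is therefore $\|\mathrm{Re}(\bbS)\|=\lambda_{\max}\!\bigl(\tfrac12(\bbS+\bbS^{\ast})\bigr)$. The operator $\tfrac12(\bbS+\bbS^{\ast})$ is the tridiagonal matrix with $\tfrac12$ on both off-diagonals; its eigenvalues are the classical $\cos\tfrac{k\pi}{n+1}$ for $k=1,\ldots,n$ (with eigenvectors having $j$-th coordinate $\sin\tfrac{jk\pi}{n+1}$). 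Hence $\omega_{2}(\bbS)=\cos\tfrac{\pi}{n+1}$, and equality in the theorem for $T=\|T\|\bbS$ follows by homogeneity.

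For the upper bound, I would normalize $\|T\|=1$ and, again using the rotation invariance $\omega_{2}(T)=\sup_{\theta}\|\mathrm{Re}(e^{i\theta}T)\|$ (valid because $(e^{i\theta}T)^{n}=0$ too), reduce to showing $\langle Tx,x\rangle\le\cos\tfrac{\pi}{n+1}$ for any unit $x$ with $\langle Tx,x\rangle$ real and nonnegative. The natural invariant at hand is the finite Krylov sequence $f_{k}=T^{k}x$, $k=0,\ldots,n-1$, with $f_{n}=0$ and $\|f_{k+1}\|\le\|f_{k}\|$. The idea I would try is a weighted-sum (Abel-summation) estimate with the shift weights $\alpha_{k}=\sin\tfrac{k\pi}{n+1}$: these satisfy the harmonic recursion $\alpha_{k-1}+\alpha_{k+1}=2\cos\tfrac{\pi}{n+1}\,\alpha_{k}$, which is precisely the identity that makes the shift estimate tight. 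Forming the vector $y=\sum_{k=1}^{n}\alpha_{k}f_{k-1}$ and computing $\mathrm{Re}\langle Ty,y\rangle$, one obtains a combination of inner products $\langle f_{j},f_{j-1}\rangle$ controlled by Cauchy–Schwarz through $\|f_{j}\|\|f_{j-1}\|\le 1$; the recursion then collapses the sum into $\cos\tfrac{\pi}{n+1}\,\|y\|^{2}$, giving the desired bound.

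The main obstacle is this second step: the finite-dimensional reduction is easy, but passing from the inequality for $\bbS$ itself to every nilpotent of order $n$ requires a genuinely new idea, because a nilpotent $T$ is only \emph{similar}, not unitarily similar, to a Jordan block — so we cannot just transport the spectral computation from $\bbS$. The weighted-vector trick above is what does the real work; I would expect the equality case to come out of tracking when Cauchy–Schwarz and the norm-monotonicity $\|f_{k+1}\|\le\|f_{k}\|$ are both saturated, forcing $\|Tf_{k}\|=\|f_{k}\|$ along the whole Krylov chain and $\langle f_{j},f_{k}\rangle=0$ for $j\neq k$, which together identify $T$ restricted to $\mathrm{span}\{f_{0},\ldots,f_{n-1}\}$ with $\bbS$ up to a unitary.
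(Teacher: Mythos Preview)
Your computation of $\omega_{2}(\bbS)$ is correct and matches the paper's argument (the $k=1$ case of Theorem~2.1): circular symmetry reduces the problem to the top eigenvalue of the tridiagonal matrix $\tfrac12(\bbS+\bbS^{\ast})$, which is $\cos\frac{\pi}{n+1}$.

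The second step, however, has a genuine gap. With $f_{k}=T^{k}x$ and $y=\sum_{k=1}^{n}\alpha_{k}f_{k-1}$ one finds
\[
\langle Ty,y\rangle=\sum_{j,k}\alpha_{j}\alpha_{k}\langle f_{j},f_{k-1}\rangle,
\]
which involves \emph{all} pairs $(j,k)$, not only nearest-neighbour terms; the Krylov vectors $f_{0},\dots,f_{n-1}$ are mutually orthogonal when $T=\bbS$, but for a general nilpotent contraction they need not be, so the harmonic recursion cannot collapse the sum as you describe. Worse, the passage through $y$ is circular: since $p(z)=\sum_{k}\alpha_{k}z^{k-1}$ has $p(0)=\alpha_{1}\neq0$, the operator $p(T)$ is invertible on $\mathcal{H}$, and hence as $x$ ranges over all vectors so does $y=p(T)x$; the inequality $\mathrm{Re}\langle Ty,y\rangle\le\cos\frac{\pi}{n+1}\,\|y\|^{2}$ you are aiming at is then literally the original claim with $y$ renamed to $x$. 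The weights $\alpha_{k}=\sin\frac{k\pi}{n+1}$ are indeed the right objects and do appear in Haagerup--de~la~Harpe's own proof, but the mechanism there is not a single test-vector construction.

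For comparison, the paper bypasses this difficulty entirely. In the Corollary following Theorem~2.4 it proves the inequality by dilation: the explicit isometry $Vx=\sum_{t=1}^{n}D_{T}T^{t-1}x\otimes e_{t}$ into $\mathcal{D}_{T}\otimes{\C}^{n}$ realizes any contraction $T$ with $T^{n}=0$ as a compression of a direct sum of copies of the $n$-shift, whence $W(T)\subseteq W(I_{r}\otimes\bbS)=\overline{D(0,\cos\frac{\pi}{n+1})}$ by property~P5 together with the model computation. This is precisely the ``genuinely new idea'' you anticipated needing: not a combinatorial identity, but the structural fact that $\bbS$ is universal among nilpotent contractions of order~$n$.
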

\hspace{0.5cm}In this mathematical note, we extend this result to the higher rank-$k$ numerical range of the shift. The notion of the the higher rank-k numerical range of $T \in\mathcal{B(H)}$ is introduced in \cite{Choi3} and it's denoted by:
\begin{eqnarray*}
 \Lambda_{k}(T)=\left\lbrace \lambda\in \C: PTP=\lambda P ~ \mbox{for some rank-$k$ orthogonal projection} ~P\right\rbrace, 
\end{eqnarray*}
The introduction of this notion was motivated by a problem in quantum error correction; see \cite{Choi4}. If $P$ is a rank-1 orthogonal projection then $P=x\otimes x$ for some $x\in\C^{n}$ and $PTP=<Tx,x>P$. Then when $k=1$, this concept is reduces to the classical numerical range $W(T)$, which is well known to be convex by the Toeplitz-Hausdorff theorem; for exemple see \cite{Li1} for a simple proof. In \cite{Choi1}, it's conjectured that $\Lambda_{k}(T)$ is convex, and reduced the convexity problem to the problem of showing that $0\in\Lambda_{k}(T') $ where $$T'=\left(
\begin{array}{cc}
I_{k} &   X     \\
 Y&   -I_{k} \end{array}
\right) $$
for arbitrary $X,Y\in \mathcal{M}_{k}$ (the algebra of $k\times k$ complex matrix). They further reduced this problem to the existance of a Hermitian matrix $H$ satisfying the matrix equation 
\begin{equation}
I_{k}+MH+H{M}^{\ast}-HRH=H
\end{equation}
for arbitrary $M\in \mathcal{M}_{k}$ and a positive definite $R\in \mathcal{M}_{k}$. In \cite{Woerdeman}, H. Woerdeman proved that equation (1.1) is equivalent to Ricatti equation:
\begin{equation}
HRH-H({M}^{\ast}-I_{k}/2)-(M-I_{k}/2)H-I_{k}=0_{k},
\end{equation}
and using the theory of Ricatti equations (see \cite{Lancaster}, Theorem 4), the equation (1.2) is solvable which prove the convexity of $\Lambda_{k}(T)$. In \cite{Choi3}, the authors showed that if dim$\mathcal{H}<\infty$ and $T \in \mathcal{B(H)}$ is a Hermitian matrix with eigenvalues $\lambda_{1}\leqslant\lambda_{2}\dots\leqslant\lambda_{n}$ then the rank-k nuemrical range $\Lambda_{k}(T) $ coincides with $\left[\lambda_{k},\lambda_{n+1-k} \right] $ which is a non-degenerate closed interval if $\lambda_{k}<\lambda_{n+1-k}$, a singleton set if $\lambda_{k}=\lambda_{n+1-k}$ and an empty set if $\lambda_{k}>\lambda_{n+1-k}$. In \cite{Lie4}, the authors proved that if dim$\mathcal{H}=n$
$$\Lambda_{k}(T)=\bigcap_{\theta\in[0,2\pi[ }\left\lbrace \mu\in\C:e^{i\theta}\mu+e^{-i\theta}\overline{\mu}\leqslant \lambda_{k}\left( e^{i\theta}T+e^{-i\theta}{T}^{\ast}\right) \right\rbrace,$$ for $1\leqslant k\leqslant n$, where $\lambda_{k}(H)$ denote the $k$th largest eigenvalue of the hermitian matrix $H\in \mathcal{M}_{n}$. This result establishes that if dim$\mathcal{H}=n$ and $T \in \mathcal{B(H)}$ is a normal matrix with eigenvalues $\lambda_{1},\dots,\lambda_{n}$ then $$\Lambda_{k}(T)=\bigcap_{1\leqslant j_{1}<\dots<j_{n-k+1}\leqslant n }\mbox{conv}\left\lbrace\lambda_{j_{1}},\dots,\lambda_{j_{n+1-k}} \right\rbrace .$$
\hspace{0.5cm}We close this section by the following properties wich are easly checked. The reader may consult \cite{Choi1},\cite{Choi2},\cite{Choi3},\cite{Choi4},\cite{Gau} and \cite{Li2}.
\begin{itemize}
\item[P1.]For any $a$ and $b \in\C, \Lambda_{k}(aT+bI)=a\Lambda_{k}(T)+b.$
\item[P2.]$\Lambda_{k}(T^{\ast})=\overline{\Lambda_{k}(T)}.$
\item[P3.]$\Lambda_{k}(T\oplus S)\supseteq\Lambda_{k}(T)\cup\Lambda_{k}(S).$
\item[P4.]For any unitary $U\in\mathcal{B(H)}, \Lambda_{k}(U^{\ast}TU)=\Lambda_{k}(T).$
\item[P5.]If $T_{0}$ is a compression of $T$ on a subspace $\mathcal{H}_{0}$ of $\mathcal{H}$ such that dim$\mathcal{H}_{0}\geq k$, then $\Lambda_{k}(T_{0})\subseteq\Lambda_{k}(T).$
\item[P6.]$W(T)\supseteq\Lambda_{2}(T)\supseteq\Lambda_{3}(T)\supseteq\dots~.$
\end{itemize}
Some results from \cite{Cassier} will be also developed in this context in a forthcoming paper.
\section{main theorem}
\hspace{0.5cm}In the following theorem we give the higher rank-$k$ numerical range of the n-dimensional shift on $\C^{n}$.
\begin{theorem}
\textit{ For any $n\geq2$ and $1\leqslant k\leqslant n,~ \Lambda_{k}(\bbS)$ coincides with the circular disc $\{z\in\C:\lvert z\lvert\leqslant\cos\dfrac{k\pi}{n+1}\}$ if $1\leqslant k\leqslant\left[\frac{n+1}{2} \right]$ and the empty set if $\left[\frac{n+1}{2} \right]<k\leqslant n$.}
\end{theorem}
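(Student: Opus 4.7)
The plan is to combine a rotational symmetry of $\bbS$ with the intersection characterization of $\Lambda_k$ recalled from \cite{Lie4}, reducing the question to a single Hermitian eigenvalue computation for the Jacobi matrix $\bbS + \bbS^{\ast}$.

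First, I would exploit the diagonal unitary $D_{\theta} = \operatorname{diag}(1, e^{i\theta}, e^{2i\theta}, \ldots, e^{(n-1)i\theta})$, which satisfies $D_{\theta}^{\ast}\bbS D_{\theta} = e^{-i\theta}\bbS$. Combined with properties P1 and P4 in the introduction, this yields $e^{-i\theta}\Lambda_{k}(\bbS) = \Lambda_{k}(\bbS)$ for every $\theta$, so $\Lambda_{k}(\bbS)$ is invariant under rotation about the origin. Since $\Lambda_{k}(\bbS)$ is convex by Woerdeman's theorem recalled in the introduction, it must be either empty or a closed disc centered at $0$; it remains to determine its radius.

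Next, I would apply the intersection formula
\[
\Lambda_{k}(\bbS) = \bigcap_{\theta \in [0, 2\pi[} \bigl\{ \mu \in \C : e^{i\theta}\mu + e^{-i\theta}\overline{\mu} \leq \lambda_{k}(e^{i\theta}\bbS + e^{-i\theta}\bbS^{\ast}) \bigr\}.
\]
Conjugation by the same $D_{\theta}$ gives $D_{\theta}^{\ast}(e^{i\theta}\bbS + e^{-i\theta}\bbS^{\ast})D_{\theta} = \bbS + \bbS^{\ast}$, so $\lambda_{k}(e^{i\theta}\bbS + e^{-i\theta}\bbS^{\ast}) = \lambda_{k}(\bbS + \bbS^{\ast})$ is independent of $\theta$. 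Since $\sup_{\theta}(e^{i\theta}\mu + e^{-i\theta}\overline{\mu}) = 2|\mu|$, the intersection collapses to
\[
\Lambda_{k}(\bbS) = \bigl\{ \mu \in \C : 2|\mu| \leq \lambda_{k}(\bbS + \bbS^{\ast}) \bigr\}.
\]

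Finally, $\bbS + \bbS^{\ast}$ is the classical tridiagonal Jacobi matrix with $1$'s immediately off the diagonal, whose eigenvectors are $v^{(m)}_{j} = \sin(j m \pi/(n+1))$ with corresponding eigenvalues $2\cos(m\pi/(n+1))$, $m = 1, \ldots, n$; sorting these decreasingly gives $\lambda_{k}(\bbS + \bbS^{\ast}) = 2\cos(k\pi/(n+1))$. Substituting back, $\Lambda_{k}(\bbS)$ is the closed disc of radius $\cos(k\pi/(n+1))$ when this quantity is nonnegative, i.e.\ exactly when $1 \leq k \leq [(n+1)/2]$, and empty otherwise. The only real conceptual step is the rotational-invariance reduction that collapses the intersection to one inequality; the Jacobi eigenvalue computation is textbook, so I do not anticipate a genuinely hard obstacle.
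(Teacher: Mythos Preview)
Your proof is correct. It follows the same overall strategy as the paper---apply the Li--Sze half-plane intersection formula and compute the $k$th eigenvalue of the real part---but you streamline the computation in a way the paper does not.

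The paper attacks $e^{i\theta}\bbS + e^{-i\theta}\bbS^{\ast}$ head-on for each $\theta$: it treats this as a tridiagonal Toeplitz matrix, sets up the recurrence $\Delta_{n}(\lambda) = -\lambda\Delta_{n-1} - \Delta_{n-2}$ for its characteristic polynomial, solves it in closed form, and only then sees that the eigenvalues $2\cos(\nu\pi/(n+1))$ happen not to depend on $\theta$. You instead observe the rotational symmetry $D_{\theta}^{\ast}\bbS D_{\theta} = e^{-i\theta}\bbS$ up front, which kills the $\theta$-dependence of $\lambda_{k}$ by unitary invariance of spectra before any eigenvalue is computed; the intersection then collapses to the single inequality $2|\mu| \le \lambda_{k}(\bbS + \bbS^{\ast})$ and you invoke the classical Jacobi spectrum. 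Your route is more conceptual and avoids carrying $\theta$ through a determinant expansion; the paper's route is more self-contained in that it rederives the tridiagonal eigenvalues rather than citing them. Your opening paragraph (rotational invariance plus Woerdeman convexity $\Rightarrow$ disc or empty) is logically redundant once the intersection collapses, but it does give a clean a priori picture.
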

\begin{proof}
First observe that
\begin{eqnarray}
 \Lambda_{k}(\bbS)&=&\bigcap_{\theta\in[0,2\pi[ }\left\lbrace \mu\in\C:e^{i\theta}\mu+e^{-i\theta}\overline{\mu}\leqslant \lambda_{k}\left( e^{i\theta}\bbS+e^{-i\theta}\bbs\right) \right\rbrace\nonumber\\
&=&\bigcap_{\theta\in[0,2\pi[ }\left\lbrace \mu\in\C:Re(e^{i\theta}\mu)\leqslant \dfrac{1}{2}\lambda_{k}\left( e^{i\theta}\bbS+e^{-i\theta}\bbs\right) \right\rbrace\nonumber\\
&=&\bigcap_{\theta\in[0,2\pi[ }e^{i\theta}\left\lbrace z\in\C:Re(z)\leqslant \dfrac{1}{2}\lambda_{k}\left( e^{i\theta}\bbS+e^{-i\theta}\bbs\right) \right\rbrace
\end{eqnarray}
On the other hand, we have
$$e^{i\theta}\bbS+e^{-i\theta}\bbs=\left(
\begin{array}{cccccc}
0                & e^{-i\theta}          &  0          &\dots    &     0          &  0   \dots\\
e^{i\theta}      & 0                     &e^{-i\theta} & \dots    &     0           & 0    \dots               \\
0                & e^{i\theta}           &  0          & \dots   &0           &    0     \\
 \vdots               &  \vdots                    &   \vdots        & \ddots   &     \vdots    &   \vdots    \\
0           &0                  &0        & \dots &0      &   e^{-i\theta}         \\
  0               &     0                  &      0       &   \dots    &  e^{i\theta} &0
\end{array}
\right).
$$
Note that $e^{i\theta}\bbS+e^{-i\theta}\bbs$ is a Toeplitz matrix associated to the Toeplitz form $$f_{\theta}(t)=2\cos(\theta+t).$$
The eigenvalues satisfy the caracteristic equation
\begin{eqnarray}
 \Delta_{n}(\lambda)&=& Det \left( e^{i\theta}\bbS+e^{-i\theta}\bbs\right) \nonumber\\
&=&\left\lvert
\begin{array}{cccccc}
-\lambda                & e^{-i\theta}          &  0          &\dots    &     0          &  0   \dots\\
e^{i\theta}      & -\lambda                     &e^{-i\theta} & \dots    &     0           & 0    \dots               \\
0                & e^{i\theta}           &  -\lambda           & \dots   &0           &    0     \\
 \vdots               &  \vdots                    &   \vdots        & \ddots   &     \vdots    &   \vdots    \\
0           &0                  &0        & \dots &-\lambda       &   e^{-i\theta}         \\
  0               &     0                 &      0       &   \dots    &  e^{i\theta} &-\lambda 
\end{array}
\right\lvert\nonumber
\end{eqnarray}

Expanding this determinant, we obtain the recurrence relation
$$\Delta_{n}(\lambda)=-\lambda\Delta_{n-1}-\Delta_{n-2},~~~n=2,3,4,\dots,$$
This recurrence relation holds also for $n=1$ provided we put $\Delta_{0}=1$ and $\Delta_{-1}=0$. In order to find an explicit representation of $\Delta_{n}(\lambda)$, we write convenently $$ \lambda=2\cos(\theta+t)=f_{\theta}(t)$$
and form the caracteristic equation $$\rho^{2}=-\lambda\rho-1=-2\rho\cos(\theta+t)-1 $$ 
with the roots $-e^{i(\theta+t)}$ and $-e^{-i(\theta+t)}$ so that $$\Delta_{n}(2\cos(\theta+t))=(-1)^{n}(Ae^{in(\theta+t)}+Be^{-in(\theta+t)})$$ where the constants $A$ and $B$ can be determined from the cases $n=-1$ and $n=0$. Thus $$\Delta_{n}(2\cos(\theta+t))=(-1)^{n}\frac{\sin((n+1)(\theta+t))}{\sin(\theta+t).}$$
This yields the eigenvalues $$\lambda_{\nu}=2\cos(\dfrac{\nu\pi}{n+1}),~~~ \nu=1,2,\dots n.$$
This implies of course that 
$$\Lambda_{k}(\bbS)=\bigcap_{\theta\in[0,2\pi[ }e^{i\theta}\left\lbrace z\in\C:Re(z)\leqslant \cos(\dfrac{k\pi}{n+1}) \right\rbrace$$
Thus $\Lambda_{k}(\bbS)$ is the intersection of closed half planes. We note that $\cos(\dfrac{k\pi}{n+1})$ is positive if and only if $k\leqslant\left[\frac{n+1}{2} \right]$. 

\textbf{Case 1.}If $k\leqslant\left[\frac{n+1}{2} \right]$ In this case $\Lambda_{k}(\bbS)$ is circular disc $\{z\in\C:\lvert z\lvert\leqslant\cos\dfrac{k\pi}{n+1}\}$. 

\textbf{Case 2.} If  $k>\left[\frac{n+1}{2} \right]$, then 
\begin{eqnarray*}
\Lambda_{k}(\bbS)&\subseteq& \left\lbrace z\in\C:Re(z)\leqslant \cos(\dfrac{k\pi}{n+1}) \right\rbrace \bigcap e^{i\pi}\left\lbrace z\in\C:Re(z)\leqslant \cos(\dfrac{k\pi}{n+1}) \right\rbrace\\
&=&\emptyset.
\end{eqnarray*}

This completes the proof.
\end{proof}
\begin{theorem}
For any integer $k\geq1$
 $$\Lambda_{k}(S)=D(0,1)$$
\end{theorem}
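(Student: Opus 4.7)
The plan is to use the finite-dimensional result (Theorem 2.1) together with the compression property P5, exploiting the fact that for each $n \geq 1$ the matrix $S_{n}$ is (unitarily equivalent to) the compression of $S$ to the $n$-dimensional model space $H(z^{n}) = \H2\ominus z^{n}\H2$.

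First, I would establish the easy inclusion $\Lambda_{k}(S) \subseteq D(0,1)$. By property P6 we have $\Lambda_{k}(S) \subseteq W(S)$ for every $k \geq 2$ (and trivially for $k=1$), and it is classical that the numerical range of the unilateral shift on $\H2$ is the open unit disc. Thus $\Lambda_{k}(S) \subseteq D(0,1)$ for every $k \geq 1$.

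For the reverse inclusion $D(0,1) \subseteq \Lambda_{k}(S)$, fix $k \geq 1$ and choose any $n$ large enough that $k \leq [(n+1)/2]$. Since $S_{n}$ is the compression of $S$ to $H(z^{n})$, a subspace of dimension $n \geq k$, property P5 gives
$$\Lambda_{k}(S_{n}) \subseteq \Lambda_{k}(S).$$
By Theorem 2.1 the left-hand side is the closed disc of radius $\cos(k\pi/(n+1))$ centred at $0$. Taking the union over all sufficiently large $n$,
$$\bigcup_{n \geq 2k-1} \Lambda_{k}(S_{n}) = \bigcup_{n \geq 2k-1} \bigl\{z \in \C : |z| \leq \cos\tfrac{k\pi}{n+1}\bigr\} = D(0,1),$$
since $\cos(k\pi/(n+1)) \nearrow 1$ as $n \to \infty$. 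Combined with the previous step this yields $\Lambda_{k}(S) = D(0,1)$.

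There is no real obstacle here; the only thing to be careful about is the distinction between the open disc $D(0,1)$ and its closure: each finite-dimensional piece $\Lambda_{k}(S_{n})$ is a closed disc of radius strictly less than $1$, so the union is exactly the open unit disc, matching the fact that $1 \notin W(S)$. The entire argument is thus a one-line application of Theorem 2.1 plus the compression property, once one observes the monotone exhaustion of $\H2$ by the model spaces $H(z^{n})$.
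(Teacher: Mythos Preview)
Your proof is correct and follows essentially the same strategy as the paper: the inclusion $D(0,1)\subseteq\Lambda_{k}(S)$ is obtained there exactly as you do, via property~P5 and Theorem~2.1 applied to the compressions $S_{n}$. For the reverse inclusion the paper does not quote $W(S)=D(0,1)$ as a known fact but instead argues it directly---showing that $\Lambda_{k}(S)$ is rotation-invariant by conjugating with the unitary $U_{\theta}f(z)=f(e^{-i\theta}z)$, and then observing that $1\in\Lambda_{k}(S)\subseteq W(S)$ would force $1$ to be an eigenvalue of $S$; your shortcut of citing the classical description of $W(S)$ is equivalent and a bit cleaner.
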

\begin{proof} 
  Let a fixed $k\geq 1$, we have $D(0,1)\subseteq\Lambda_{k}(S)$ which is du to (P.5) and theorem (2.1). Now let $\lambda$ in $\Lambda_{k}(S)$ then there exists a rank-$k$ orthogonal projection $P$ such that $PSP= \lambda P$. Let denote by $U_{\theta}$ the unitary operator on $\H2$ defined by $U_{\theta}(f)(z)=f(ze^{-i\theta})$, then if we denote by $Q$ the rank-$k$ orthogonal projection $U_{\theta}PU_{\theta}^{\ast}$ we can easly check that $QSQ = \lambda e^{i\theta}$ which implies that $\Lambda_{k}(S)$ is a circular disc centred in $0$. On the other hand if $1\in \Lambda_{k}(S)$ then $1\in W(S)$ and there exists a unitary $f\in \H2$ such that $<Sf,f>=1$ wich implies that $1$ is un eigenvalue for $S$ which is absurd.
\end{proof}

\hspace{0.5cm}On the sequel of this paper, let denote by
$$\rho(k,r) = \left\{
    \begin{array}{ll}
        k/r & \mbox{if} ~k/r~ \mbox{is is integer} \\
        \left[ k/r\right] +1 & \mbox{unless}
    \end{array}
\right.
$$where $k$ and $r$ are arbitrary numbers.
\begin{lemma}
 \textit{For a fixed $n\geq1$ and $r\geq1$, let denote by $\lambda_{1}>\dots>\lambda_{n}$; $n$ real numbers and ${({\lambda'}_{p})}_{1\leqslant p\leqslant nr}$ a finite sequence defined by: $${\lambda'}_{1}=\dots={\lambda'}_{r}=\lambda_{1},\dots,{\lambda'}_{(n-1)r+1}=\dots={\lambda'}_{nr}=\lambda_{n}.$$
Then for each $1\leqslant k\leqslant nr$, the $k$th largest term of ${({\lambda'}_{t})}_{1\leqslant t\leqslant nr}$ is $\lambda_{\rho(k,r)}$.}
\end{lemma}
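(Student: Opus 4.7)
The plan is to observe that the sequence $({\lambda'}_p)_{1\leqslant p\leqslant nr}$ is already arranged in (weakly) decreasing order, so the $k$-th largest term is simply the $k$-th term ${\lambda'}_k$. Since the $\lambda_i$'s are strictly decreasing and each appears in a block of length $r$, the only task is to identify which block position $k$ lands in.

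First I would make the block structure explicit: for $1\leqslant j\leqslant n$, the indices $p$ with $(j-1)r+1\leqslant p\leqslant jr$ satisfy ${\lambda'}_p=\lambda_j$. Since $\lambda_1>\lambda_2>\dots>\lambda_n$, the resulting sequence satisfies ${\lambda'}_1\geqslant{\lambda'}_2\geqslant\dots\geqslant{\lambda'}_{nr}$, and therefore its $k$-th largest term is exactly ${\lambda'}_k$.

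Next I would pin down the block index $j=j(k)$ such that $(j-1)r+1\leqslant k\leqslant jr$. Dividing by $r$ gives $j-1<k/r\leqslant j$, i.e., $j=\lceil k/r\rceil$. A short case split then matches this with $\rho(k,r)$: if $r$ divides $k$, then $k/r$ is an integer and $\lceil k/r\rceil=k/r=\rho(k,r)$; otherwise $\lceil k/r\rceil=\lfloor k/r\rfloor+1=\rho(k,r)$. Combining, ${\lambda'}_k=\lambda_{j(k)}=\lambda_{\rho(k,r)}$, which is what is claimed.

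There is no real obstacle here; the only thing to be a little careful about is the ceiling/floor bookkeeping in the definition of $\rho(k,r)$, and the verification that the block-wise constant, block-wise strictly decreasing structure of the ${\lambda'}_p$'s forces the sorted order to coincide with the natural order. Once these are in place, the identification $\lceil k/r\rceil=\rho(k,r)$ finishes the argument.
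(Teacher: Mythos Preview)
Your proof is correct and is in fact more direct than the paper's. The paper proceeds by induction on $k$: assuming the $(k-1)$th largest term is $\lambda_{\rho(k-1,r)}$, it splits into two cases according to whether $r$ divides $k-1$ and tracks how passing from position $k-1$ to position $k$ either stays within the same block or crosses into the next one, matching this with the behaviour of $\rho(\cdot,r)$. Your argument bypasses the induction entirely by noting at once that the sequence $(\lambda'_p)$ is already weakly decreasing, so the $k$th largest term is simply $\lambda'_k$, and then reading off the block index as $\lceil k/r\rceil=\rho(k,r)$. The inductive route has the minor advantage of never explicitly invoking the ceiling function, but your approach is shorter, makes the combinatorics transparent, and leaves less room for bookkeeping errors.
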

\begin{proof}
The claim is obvious in the case where $r=1$. We may assume $r\geq 2$. We prove the result by induction on $k$.
If $k=1$, then the largest term is $\lambda_{1}=\lambda_{\rho(1,r)}$. So the result hold for $k=1$. Assume that $k>1$, and the reslut is valid for the $m$th largest term of ${({\lambda'}_{t})}_{1\leqslant t\leqslant nr}$ whenever $m<k$.

\textbf{Case 1.} Suppose that $\rho(k-1,r)=\frac{k-1}{r}$, then there exists $1\leqslant p\leqslant n-1$ such that $k-1=pr$. By induction assumption, we have $\lambda_{\rho(k-1,r)}={\lambda'}_{pr}=\lambda_{p}$, which implies that the $k$th largest term of ${({\lambda'}_{t})}_{1\leqslant t\leqslant nr}$ is $${\lambda'}_{pr+1}=\lambda_{p+1}=\lambda_{\frac{k-1}{r}+1}=\lambda_{[\frac{k}{r}]+1}=\lambda_{\rho(k,r)}.$$

\textbf{Case 2.} Suppose that $\rho(k-1,r)=[\frac{k-1}{r}]+1$, then there exist $1\leqslant q\leqslant n-1$ and $1\leqslant s\leqslant r-1$ such that $k-1=qr+s$. First, note that $\rho(k-1,r)=\rho(k,r)$. On the other hand, by induction assumption, we have $\lambda_{\rho(k-1,r)}={\lambda'}_{qr+s}=\lambda_{q+1}$. Consequently the $k$th largest term of ${({\lambda'}_{t})}_{1\leqslant t\leqslant nr}$ is $${\lambda'}_{qr+s+1}={\lambda}_{q+1}=\lambda_{\rho(k-1,r)}=\lambda_{\rho(k,r)}.$$ The proof is now complete.
\end{proof}

\hspace{0.5cm}Let $D_{T}=(I_{N}-T^{\ast}T)^{1/2}$ be the defect operator of $T$ and $\mathcal{D}_{T}$ the closed range of $D_{T}$. Let denote by $r=\mbox{dim} \mathcal{D}_{T}$.
\begin{theorem}
\textit{Consider $T\in \mathcal{B(H)}$ such that $\lVert T\lVert\leqslant 1$ and $T^{n}=0$. Then $\Lambda_{k}(T)$ is contained in the circular disc $\{z\in \C:\lvert z\lvert\leqslant\cos(\frac{\rho(k,r)\pi}{n+1})\}$ if $1\leqslant\rho(k,r)\leqslant[\frac{n+1}{2}]$ and empty if $\rho(k,r)>[\frac{n+1}{2}]$.}
\end{theorem}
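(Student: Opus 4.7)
The plan is to realize $T$ as the restriction of a block shift $\bbS\otimes I_{r}$ to an invariant subspace, so that property P5 reduces the problem to computing $\Lambda_{k}(\bbS\otimes I_{r})$. That computation is in turn a short variant of the proof of Theorem~2.1, with the eigenvalue multiplicities organised by Lemma~2.2.

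First I would construct the embedding. Since $\lVert T\rVert\leqslant 1$ and $T^{n}=0$, define $J:\mathcal{H}\to \mathcal{D}_{T}\oplus\cdots\oplus\mathcal{D}_{T}$ ($n$ copies) by
$$Jx=\bigl(D_{T}T^{n-1}x,\,D_{T}T^{n-2}x,\,\ldots,\,D_{T}Tx,\,D_{T}x\bigr).$$
From $D_{T}^{2}=I-T^{\ast}T$ one has $\lVert D_{T}y\rVert^{2}=\lVert y\rVert^{2}-\lVert Ty\rVert^{2}$, and the sum $\lVert Jx\rVert^{2}=\sum_{i=0}^{n-1}\lVert D_{T}T^{i}x\rVert^{2}$ telescopes to $\lVert x\rVert^{2}-\lVert T^{n}x\rVert^{2}=\lVert x\rVert^{2}$; hence $J$ is an isometry. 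A direct computation, in which $T^{n}=0$ is used to kill the would-be top coordinate, shows
$$(\bbS\otimes I_{r})Jx=J(Tx)\qquad\text{for every }x\in\mathcal{H},$$
so $\mathrm{Range}(J)$ is an invariant subspace of $\bbS\otimes I_{r}$ on which the restriction is unitarily equivalent to $T$.

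Property P5 then yields $\Lambda_{k}(T)\subseteq\Lambda_{k}(\bbS\otimes I_{r})$, and to evaluate the right-hand side I would feed the intersection formula of \cite{Lie4} the obvious identity
$$e^{i\theta}(\bbS\otimes I_{r})+e^{-i\theta}(\bbs\otimes I_{r})=\bigl(e^{i\theta}\bbS+e^{-i\theta}\bbs\bigr)\otimes I_{r}.$$
The eigenvalues of $e^{i\theta}\bbS+e^{-i\theta}\bbs$ are $2\cos(\nu\pi/(n+1))$ for $\nu=1,\ldots,n$, as already worked out in the proof of Theorem~2.1; tensoring with $I_{r}$ reproduces each of them with multiplicity $r$, and Lemma~2.2 singles out the $k$-th largest as $2\cos(\rho(k,r)\pi/(n+1))$. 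The intersection of the resulting half-planes is then handled exactly as in Cases~1--2 of the proof of Theorem~2.1, giving the closed disc of radius $\cos(\rho(k,r)\pi/(n+1))$ when $\rho(k,r)\leqslant\left[\frac{n+1}{2}\right]$ and the empty set otherwise.

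The main obstacle is the embedding step: checking that $J$ is not merely isometric but actually intertwines $T$ with $\bbS\otimes I_{r}$. Both properties hinge critically on the nilpotency hypothesis, since $T^{n}=0$ is what collapses the telescoping sum to $\lVert x\rVert^{2}$ and also what annihilates the component of $J(Tx)$ that would otherwise fall outside $\mathrm{Range}(J)$. Once this dilation is in hand, the remainder of the argument is a routine combination of property P5, Lemma~2.2, and the eigenvalue computation carried out for Theorem~2.1.
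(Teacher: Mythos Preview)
Your proposal is correct and follows essentially the same route as the paper: the defect-operator isometry into $n$ copies of $\mathcal{D}_{T}$, the intertwining relation reducing $T$ to a compression of a block shift, property~P5, and then the Li--Sze half-plane formula together with Lemma~2.2 to identify the $k$-th eigenvalue. The only differences are cosmetic: the paper uses $I_{r}\otimes\bbs$ with $V(x)=\sum_{t=1}^{n}D_{T}T^{t-1}x\otimes e_{t}$, whereas you use $\bbS\otimes I_{r}$ with the coordinates in the reverse order; these are unitarily equivalent models and the arguments coincide.
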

\begin{proof} 
If $T$ is a contaction with $T^{n}=0$, then $T$ can be viewed as a compression of $I_{r}\otimes \bbs$ acting on the Hilbert space $\mathcal{D}_{T}\otimes{\C}^{n}$. Consider the isometry $\mathcal{H}\rightarrow \mathcal{D}_{T}\otimes{\C}^{n}$, $$V(x)={\sum}_{t=1}^{n}D_{T}T^{t-1}x\otimes e_{t}$$
where ${\left\lbrace e_{l}\right\rbrace}_{l=1}^{n}$ is the canonical basis of ${\C}^{n}$. Note that $$VTx={\sum}_{t=1}^{n}D_{T}T^{t}x\otimes e_{t}={\sum}_{t=1}^{n-1}D_{T}T^{t}x\otimes e_{t}=(I_{r}\otimes \bbs)Vx.$$ It follows that $$T=V^{\ast}(I_{r}\otimes \bbs)V$$
and from (P.5) 
\begin{equation}
 \Lambda_{k}(T)=\Lambda_{k}(V^{\ast}(I_{r}\otimes \bbs)V)\subseteq\Lambda_{k}(I_{r}\otimes \bbs),~~\mbox{for any}   ~1\leqslant k\leqslant nr.
\end{equation}
Now,\\ 
$ \Lambda_{k}(I_{r}\otimes \bbs)$
\begin{eqnarray*}
 &=&\bigcap_{\theta\in[0,2\pi[ }\left\lbrace \mu\in\C:e^{i\theta}\mu+e^{-i\theta}\overline{\mu}\leqslant \lambda_{k}\left( e^{i\theta}(I_{r}\otimes \bbs)+e^{-i\theta}(I_{r}\otimes \bbs)^{\ast}\right) \right\rbrace\\
 &=&\bigcap_{\theta\in[0,2\pi[ }\left\lbrace \mu\in\C:e^{i\theta}\mu+e^{-i\theta}\overline{\mu}\leqslant \lambda_{k}\left( e^{i\theta}(I_{r}\otimes \bbs)+e^{-i\theta}(I_{r}\otimes {\bbS})\right) \right\rbrace\\
 &=&\bigcap_{\theta\in[0,2\pi[ }\left\lbrace \mu\in\C:e^{i\theta}\mu+e^{-i\theta}\overline{\mu}\leqslant \lambda_{k}\left(I_{r}\otimes( e^{i\theta}\bbS+e^{-i\theta}\bbs)\right) \right\rbrace\\
 &=&\bigcap_{\theta\in[0,2\pi[ }\left\lbrace \mu\in\C:e^{i\theta}\mu+e^{-i\theta}\overline{\mu}\leqslant \lambda_{k}\left(\oplus_{i}^{r}( e^{i\theta}\bbS+e^{-i\theta}\bbs)\right) \right\rbrace\\
 &=&\bigcap_{\theta\in[0,2\pi[ }e^{i\theta}\left\lbrace z\in\C:Re(z)\leqslant \cos(\frac{\rho(k,r)\pi}{n+1})\ \right\rbrace\\
\end{eqnarray*}

where the last equality is due to the lemma (2.2) and theorem (2.1). 
Thus $$\Lambda_{k}(I_{r}\otimes \bbs)= \left\{
    \begin{array}{ll}
        \overline{\textit{D}(0,\cos(\frac{\rho(k,r)\pi}{n+1}))} & \mbox{if }~ 1\leqslant\rho(k,r)\leqslant[\frac{n+1}{2}] \\
        \emptyset & \mbox{if} ~~[\frac{n+1}{2}]<\rho(k,r)\leqslant n
    \end{array}
\right.$$
Therefore,\\ if $1\leqslant k\leqslant nr$, (2.2) implies that $\Lambda_{k}(T)\subseteq \overline{\textit{D}(0,\cos(\frac{\rho(k,r)\pi}{n+1}))}$ if $1\leqslant\rho(k,r)\leqslant[\frac{n+1}{2}]$ and empty if $[\frac{n+1}{2}]<\rho(k,r)\leqslant n$. Finally, if $k>nr$, $\Lambda_{k}(T)=\emptyset$ from (P6).
\end{proof}
\begin{corollary}[U. Haagerup, P. de la Harpe,\cite{Haagerup}]
 \textit{Consider $T \in \mathcal{B(H)}$ such that $\lVert T\lVert\leqslant 1$ and $T^{n}=0$. Then we have $\omega_{2}(T)\leqslant\cos(\frac{\pi}{n+1}).$}
\end{corollary}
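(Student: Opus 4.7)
The plan is to deduce this corollary directly from Theorem~2.3 by specializing to $k=1$ and invoking the identification $W(T)=\Lambda_{1}(T)$ noted in the introduction. The main content is that the corollary is exactly the $k=1$ instance of our more general rank-$k$ bound, so no new analytical work is needed.

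First I would compute $\rho(1,r)$ for arbitrary $r\geq 1$. By the definition of $\rho$, if $r=1$ then $1/r=1$ is an integer and $\rho(1,1)=1$; if $r\geq 2$ then $1/r$ is not an integer, so $\rho(1,r)=[1/r]+1=0+1=1$. Hence $\rho(1,r)=1$ regardless of the value of $r=\dim\mathcal{D}_{T}$. Since $n\geq 2$, we certainly have $1\leq\bigl[\tfrac{n+1}{2}\bigr]$, so we are in the non-empty regime of Theorem~2.3.

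Next, I would apply Theorem~2.3 with $k=1$. This gives
\begin{equation*}
\Lambda_{1}(T)\subseteq\Bigl\{z\in\C:|z|\leq\cos\tfrac{\pi}{n+1}\Bigr\}.
\end{equation*}
Finally, I would invoke the observation from the introduction that a rank-$1$ orthogonal projection has the form $P=x\otimes x$ with $PTP=\langle Tx,x\rangle P$, so $\Lambda_{1}(T)=W(T)$ (up to the trivial point $0$ coming from the zero vector, which does not affect the radius). Taking the supremum of moduli over $\Lambda_{1}(T)=W(T)$ yields $\omega_{2}(T)\leq\cos\tfrac{\pi}{n+1}$.

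There is no serious obstacle here: the only step that requires a moment of thought is the calculation $\rho(1,r)=1$, and this is immediate from the two cases of the piecewise definition. Everything else is just bookkeeping — recognizing that the corollary is the $k=1$ specialization of Theorem~2.3 and using $W(T)=\Lambda_{1}(T)$.
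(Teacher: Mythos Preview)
Your proof is correct and follows essentially the same approach as the paper: both deduce the corollary from the $k=1$ case of Theorem~2.3 together with $W(T)=\Lambda_{1}(T)$. The only cosmetic difference is that the paper re-traces the dilation step $T=V^{\ast}(I_{r}\otimes S_{n})V$ and then writes the chain $W(T)=\Lambda_{1}(T)\subseteq\Lambda_{1}(I_{r}\otimes S_{n})=\overline{D(0,\cos\frac{\pi}{n+1})}$, whereas you invoke Theorem~2.3 as a black box after computing $\rho(1,r)=1$; the content is identical.
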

\begin{proof}
  $T=V^{\ast}(I_{r}\otimes \bbs)V$ where $V:H\rightarrow \mathcal{D}_{T}\otimes{\C}^{n}$,$$V(x)={\sum}_{t=1}^{n}D_{T}T^{t-1}x\otimes e_{t}.$$ Now $$W(T)=\Lambda_{1}(T)=\Lambda_{1}(V^{\ast}(I_{r}\otimes \bbS)V)\subseteq\Lambda_{1}(I_{r}\otimes \bbS)=\overline{\textit{D}(0,\cos\frac{\pi}{n+1})}.$$
\end{proof}

\underline{Acknowledgements:} The author would like to express his gratitude to Gilles Cassier for his help and his good advices.

\end{document}